\documentclass[12pt,a4paper, final, twoside]{article}

\usepackage{amsmath}
\usepackage{fancyhdr}
\usepackage{amsthm}
\usepackage{amsfonts}
\usepackage{amssymb}
\usepackage{amscd}
\usepackage{latexsym}
\usepackage{amsthm}
\usepackage{graphicx}
\usepackage{graphics}
\usepackage{afterpage}
\usepackage[colorlinks=true, urlcolor=blue,  linkcolor=black, citecolor=black]{hyperref}
\usepackage{color}
\setcounter{MaxMatrixCols}{10}

\setlength{\oddsidemargin}{1pt} \setlength{\evensidemargin}{1pt}
\setlength{\hoffset}{-1in} \addtolength{\hoffset}{35mm}
\setlength{\textwidth}{140mm}
\setlength{\marginparsep}{0pt} \setlength{\marginparwidth}{0pt}
\setlength{\topmargin}{0pt}
\setlength{\voffset}{-2in} \addtolength{\voffset}{20mm}
\setlength{\textheight}{200mm}
\setlength{\headheight}{45mm}
\setlength{\headsep}{5mm}
\setlength{\footskip}{15mm}
\pagestyle{fancy}
\fancyhead{} \fancyfoot{}


\theoremstyle{plain} 
\newtheorem{thm}{Theorem}[section]

\newtheorem{lem}[thm]{Lemma}

\theoremstyle{proposition}
\newtheorem{pr}{Proposition}[section]

\newtheorem{cor}[thm]{Corollary}

\theoremstyle{definition}
\newtheorem{defi}{Definition}[section]
\theoremstyle{remark}

\numberwithin{equation}{section}
\pagenumbering{arabic} 
\setcounter{page}{1}

\begin{document}
\hyphenpenalty=100000

\begin{center}
{\Large {\textbf{\\  R-boundedness Approach to linear third differential equations in a UMD Space}}}\\[5mm]
{\large \textbf{{Bahloul Rachid}$^\mathrm{{\bf \color{red}{1}}}$\footnote{\emph{{1} : E-mail address : bahloul33r@hotmail.com}} } \\[1mm]
{\footnotesize $^\mathrm{}$ 
      }\\[3mm]}

\end{center}

\begin{flushleft}\footnotesize \it \textbf{$^\mathrm{{\bf \color{red}{1}}}$ Department of Mathematics, Faculty of Sciences and Technology, {\bf Fez}, Morocco.
}\\[3mm]
\end{flushleft}

\begin{center}\textbf {ABSTRACT}\end{center} 
{\footnotesize {{The aim of this work is to study the existence of a periodic solutions of  third order differential equations $z'''(t) = Az(t) +  f(t)$  with the periodic condition $x(0) = x(2\pi), x'(0) = x'(2\pi)$ and $x''(0) = x''(2\pi)$. Our approach is based on the  R-boundedness and $L^{p}$-multiplier
of linear operators.}}}\\
\footnotesize{{\textbf{Keywords:}} differential equations, $L^{p}$-multipliers.}\\[1mm]

\afterpage{
\fancyhead{} \fancyfoot{}
\fancyfoot[R]{\footnotesize\thepage}
\fancyhead[R]{\scriptsize \it{ }
 }}

\tableofcontents

\section{Introduction}
Motivated by the fact that  functional differential equations  arise in many areas of applied mathematics, this type of
equations has received much attention in recent years. In particular, the problem of existence of periodic solutions, has been considered by several authors. We refer the readers to papers [\cite{1}, \cite{6}, \cite{9}, \cite{14}] and the references listed therein for informations on this subject.\\
In this work, we study the existence of periodic solutions for the following differential equations 

\begin{eqnarray}\label{e2}
\left\{
\begin{array}{ccccc}
\displaystyle{z'''(t) = Az(t) +  f(t)}\\\\
x(0) = x(2\pi), x'(0) = x'(2\pi) \ \text{and} \ x''(0) = x''(2\pi).
\end{array}
\right.
\end{eqnarray}
where  $A : D(A) \subseteq  X \rightarrow X$ is a linear closed operator on  Banach space ($X, \left\|.\right\|$) and $\alpha$ can be any real number and $f \in L^{p}(\mathbb{T}, X)$ for all $p \geq 1$. \\
Hale \cite{18} and Webb \cite{23} firstly studied the first order delay equation:
\begin{equation}\label{eee}
u'(t) = Au(t) + F(u_{t}) + f(t),
\end{equation}
B\'atkai et al. \cite{5} obtained results on the hyperbolicity of delay equations using the theory of operatorvalued
Fourier multipliers. Bu \cite{8} has studied $C^{\alpha}$-maximal regularity for the problem (\ref{eee}) on R. Recently, Lizama
\cite{14} obtained necessary and sufficient conditions for the first order delay equation (\ref{eee}) to have $L^{p}$-maximal
regularity using multiplier theorems on $L^{p}$-($\mathbb{T}$;X), and $C^{\alpha}$-maximal regularity of the corresponding equation
on the real line has been studied by Lizama and Poblete \cite{15}.\\ 
Arendt \cite{1} gave necessary and sufficient conditions for the existence of periodic solutions of  the following evolution equation.
$$\displaystyle{\frac{d}{dt}x(t)= Ax(t)+f(t)}\;\; \text{for}\;\; t \in \mathbb{R},$$
where $A$ is a closed linear operator on an UMD-space $Y$. \\
Hernan et al \cite{9}, studied the existence of periodic solutions for the class of linear abstract neutral functional differential equation described in the following form:
\[\frac{d}{dt}[x(t) - Bx(t - r)]= Ax(t)+G(x_{t})+f(t)\;\;\text{ for}\;\; t \in \mathbb{R}\]
where $A: D(A) \rightarrow X$ and $B : D(B) \rightarrow X$ are closed linear operator such that $D(A) \subset D(B)$ and $G \in B(L^{p}([- 2\pi,\ \ 0],\ \ X);\ \ X)$.\\
Bahaj et al \cite{3} studied the existence of periodic solution of second degenerate differential equation described in the following form:
\[(Mx)''(t) + Ax(t) + G(x_{t}) = f(t)\;\;\text{ for}\;\; t \in \mathbb{R}\]
where $A: D(A) \rightarrow X$ and $M : D(M) \rightarrow X$ are closed linear operator such that $D(A) \subset D(M)$ and $G \in B(L^{p}([- 2\pi,\ \ 0],\ \ X);\ \ X)$.\\
The organization of this work is as follows:
In section $2$, collects definitions and basic properties of R-bounded, UMD space and Fourier multipliers,
In section $3$, we study the sufficient Conditions For the Periodic solutions of Eq. {\bf \color{red}{\eqref{e2}}},
In section $4$, we establish the periodic solution for the equation (\ref{e2}) of this work solely in terms of a property of R-boundedness for the sequence of operators $-ik^{3 }(-ik^{3}+(\alpha - 1)A)^{-1}$. We optain that the following assertion are equivalent in UMD space :
\begin{align*}
&1) D_{A}=\frac{d^{3}}{dt^{3}} - A : H^{3,p}(\mathbb{T}, X) \cap L^{p}(D(A), X) \rightarrow L^{p}(\mathbb{T}, X) \ \text{is an isomorphism}.\\
&2) \sigma_{\mathbb{Z}}(\Delta)= \phi, \ \left\{ -ik^{3}\Delta^{-1}_{k}    \right\}_{k \in \mathbb{Z}} \text{is R-bounded}. \\
&3) \forall f \in L^{p}(\mathbb{T}, X) \text{\ there exists a unique $2\pi$-periodic strong $L^{p}$-solution  of \ }  Eq. {\bf \eqref{e2}}.
\end{align*}
In section $5$, we propose an application.
In section $6$, we give the conclusion.

\section{Vector-valued space and preliminaries}
Let $X$ be a Banach Space. Firstly, we denote By $\mathbb{T}$ the group defined as the quotient $\mathbb{R}/2 \pi \mathbb{Z}$. There is an identification between functions on $\mathbb{T}$ and $2\pi$-periodic functions on $\mathbb{R}$. We consider the interval $[0, 2\pi$) as a model for $\mathbb{T}$ .\\
Given $1 \leq p < \infty $, we denote by $L^ {p}(\mathbb{T}; X)$ the space of $2\pi$-periodic locally $p$-integrable functions from $\mathbb{R}$ into $X$, with the norm:
\[\left\|f\right\|_{p}: =\left( \int_{0}^{2\pi} \left\|f(t)\right\|^{p}dt \right)^{1/p}\]
For  $f \in L^{p}(\mathbb{T}; X)$, we denote by $\hat{f}(k)$, $k \in \mathbb{Z}$ the $k$-th Fourier coefficient of $f$ that is defined by:

\[\hat{f}(k) = \frac{1}{2\pi}\int_{0}^{2\pi}e^{-ikt}f(t)dt\;\; \text{for}\;\; k \in \mathbb{Z} \ \ \text{and} \  \ t \in \mathbb{R}.\]
For $1 \leq p < \infty$, the periodic vector-valued space is defined by

\begin{equation}
H^{1,p}(\mathbb{T}; X) = \left\{ u \in L^{p}(\mathbb{T}, X)   : \exists  v \in  L^{p}(\mathbb{T}, X) , \hat{v}(k) = ik \hat{u}(k) \  \text{for all} \  \  k \in \mathbb{Z}       \right\}
\end{equation}

\subsection{UMD Space}
\begin{defi}
Let $\varepsilon  \in  ]0, 1[$ and $1 < p < \infty$. Define the operator $H_{\varepsilon}$   by:

for all $f \in L^{p}(\mathbb{R}; X)$  
\[(H_{\varepsilon}f)(t):=\frac{1}{\pi} \int_{\varepsilon < \left|s\right|  < \frac{1}{\epsilon}}^{}\frac{f(t - s)}{s}ds\]

if $\lim\limits_{\varepsilon \rightarrow 0} H_{\varepsilon}f:= Hf$ exists in $L^{p}(\mathbb{R}; X)$ Then $Hf$ is called the Hilbert transform of $f$ on $L^{p}(\mathbb{R}, X)$.
\end{defi}

\begin{defi} {\bf \color{green}{\cite{1}}} \\
A Banach space $X$ is said to be UMD space if the Hilbert transform is bounded on $L^{p}(\mathbb{R};\ \ X)$ for all $1 < p < \infty$.
\end{defi}

\subsection{$R$-bounded and $L^{p}$-multiplier}
Let $X$ and $Y$ be  Banach spaces. Then $B(X,Y)$ denotes, the space of bounded linear operators from X to Y.
\begin{defi} {\bf \color{green}{\cite{1}}}\\
A family of operators $T=(T_{j})_{j \in \mathbb{N}^{\ast}}\subset B(X,Y)$ is called $R$-bounded (\textbf{ Rademacher bounded or randomized bounded}), if there is a constant $C > 0$ and
$p \in [1, \infty)$ such that for each $n \in N, T_{j} \in $T$, x_{j}\in X$ and for all independent, symmetric, $\left\{-1,1\right\}$-valued random variables $r_{j}$ on a probability space ($\Omega, M, \mu$) the inequality
$$\left\|\sum_{j=1}^{n} r_{j} T_{j} x_{j}\right\|_{L^{p}(0,1;Y)}\leq C \left\|\sum_{j=1}^{n} r_{j} x_{j}\right\|_{L^{p}(0,1;X)}$$
is valid. The smallest $C$ is called $R$-bounded of  $(T_{j})_{j \in \mathbb{N}^{\ast}}$ and it is denoted by $R_{p}$($T$).
\end{defi}

\begin{defi}{\bf \color{green}{\cite{1}}}\\
For $1\leq p < \infty$ , a sequence $\left\{M_{k}\right\}_{k \in \mathbb{Z}} \subset \mathbf{B}(X,Y)$ is said to be an $L^{p}$-multiplier if for each $f \in L^{p}(\mathbb{T}, X)$, there exists $ u\in$ $L^{p}(\mathbb{T}, Y)$ such that $\hat{u}(k) = M_{k}\hat{f}(k)$ for all $k \in \mathbb{Z}$.
\end{defi}

\begin{pr} \label{p}$[{\bf \color{green}{\cite{1}}},\ \ Proposition\ \ 1.11]$\\
Let $X$ be a Banach space and  $\left\{M_{k}\right\}_{k \in \mathbb{Z}}$ be an $L^{p}$-multiplier, where $1 \leq p < \infty$. Then the set $\left\{M_{k}\right\}_{k \in \mathbb{Z}}$ is $R$-bounded.
\end{pr}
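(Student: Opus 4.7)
The plan is first to upgrade the multiplier hypothesis to boundedness of a single linear operator, and then to test that operator on trigonometric polynomials with Rademacher-randomized coefficients in order to extract R-boundedness of $\{M_k\}_{k \in \mathbb{Z}}$.

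First I would define $T : L^{p}(\mathbb{T}, X) \to L^{p}(\mathbb{T}, Y)$ by $Tf := u$, where $u$ is the element supplied by the multiplier hypothesis. Uniqueness of such a $u$ follows from injectivity of the Fourier transform on $L^{1}(\mathbb{T}, Y)$, so $T$ is well-defined and linear. If $f_n \to f$ in $L^p(\mathbb{T}, X)$ and $Tf_n \to g$ in $L^p(\mathbb{T}, Y)$, taking $k$-th Fourier coefficients (which are continuous in $L^1$, hence in $L^p$) yields $\hat{g}(k) = M_k \hat{f}(k)$, so $g = Tf$; the closed graph theorem then produces a constant $C > 0$ with $\left\|Tf\right\|_p \leq C \left\|f\right\|_p$.

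Next, fix distinct integers $k_1, \ldots, k_n \in \mathbb{Z}$ and vectors $x_1, \ldots, x_n \in X$, let $(r_j)$ be a Rademacher sequence on a probability space $(\Omega, \mathcal{M}, \mu)$, and set
\[
f_\omega(t) := \sum_{j=1}^{n} r_j(\omega) e^{i k_j t} x_j .
\]
Since $\hat{f}_\omega$ is supported on $\{k_1, \ldots, k_n\}$ with $\hat{f}_\omega(k_j) = r_j(\omega) x_j$, we have $(Tf_\omega)(t) = \sum_{j} r_j(\omega) e^{ik_j t} M_{k_j} x_j$. Raising $\left\|Tf_\omega\right\|_p \leq C \left\|f_\omega\right\|_p$ to the $p$-th power, integrating over $\omega$, and interchanging integrals by Fubini produces
\[
\int_\Omega \int_0^{2\pi} \Bigl\| \sum_{j=1}^{n} r_j(\omega) e^{ik_j t} M_{k_j} x_j \Bigr\|_Y^p \, dt \, d\mu(\omega) \leq C^p \int_\Omega \int_0^{2\pi} \Bigl\| \sum_{j=1}^{n} r_j(\omega) e^{ik_j t} x_j \Bigr\|_X^p \, dt \, d\mu(\omega).
\]

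Finally, I would apply the Kahane contraction principle for complex scalars of modulus one in two directions. On the right-hand side, contracting by the factors $e^{ik_j t}$ (for each fixed $t$) majorizes the inner norm by a universal multiple of $\left\|\sum_j r_j(\omega) x_j\right\|_X^p$, after which the $t$-integral collapses to a factor of $2\pi$. On the left-hand side, applying the contraction in the reverse direction with multipliers $e^{-ik_j t}$ bounds $\left\|\sum_j r_j(\omega) M_{k_j} x_j\right\|_Y^p$ by a universal multiple of the integrand, so integrating in $t$ again produces a factor of $2\pi$ on the smaller quantity. Cancelling the factors of $2\pi$ leaves precisely the R-boundedness inequality for $\{M_{k_j}\}_{j=1}^{n}$, with constant $C$ times the product of the two contraction constants; since the integers $k_j$ and vectors $x_j$ are arbitrary, R-boundedness of $\{M_k\}_{k \in \mathbb{Z}}$ follows. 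The main bookkeeping difficulty is tracking these universal constants coming from the complex form of the contraction principle; conceptually nothing deeper is required, and the same argument applies uniformly for all $1 \leq p < \infty$.
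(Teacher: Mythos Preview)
The paper does not actually prove this proposition; it is simply quoted from Arendt--Bu \cite{1}, Proposition~1.11, and used as a black box. Your argument is correct and is essentially the standard proof given in that reference: pass from the multiplier hypothesis to a bounded operator $T$ on $L^{p}(\mathbb{T},X)$ via the closed graph theorem, evaluate $T$ on trigonometric polynomials with Rademacher-randomized coefficients, integrate over $(\omega,t)$, and strip off the unimodular factors $e^{ik_{j}t}$ on both sides using the (complex) Kahane contraction principle. One small remark: you restrict to \emph{distinct} integers $k_{1},\dots,k_{n}$, whereas the definition of $R$-boundedness in principle allows repeated operators; it is a standard and easy fact that $R$-boundedness tested only on distinct selections implies the general case, so this costs nothing, but it is worth a one-line comment.
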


\begin{thm}\label{t1} \textbf{(Marcinkiewicz operator-valud multiplier Theorem)}.\\
Let $X$, $Y$ be UMD spaces and  $\left\{M_{k}\right\}_{k \in \mathbb{Z}} \subset B(X, Y)$. If the sets $\left\{M_{k}\right\}_{k \in \mathbb{Z}}$ and
$\left\{k(M_{k+1}-M_{k})\right\}_{k \in \mathbb{Z}}$ are \\$R$-bounded, then $\left\{M_{k}\right\}_{k \in \mathbb{Z}}$ is an $L^{p}$-multiplier for 
$ 1 < p < \infty $.
\end{thm}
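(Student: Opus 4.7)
The plan is to reduce the discrete, periodic multiplier statement to the continuous operator-valued Mihlin-type multiplier theorem of Weis on $L^p(\mathbb{R};X)$, which holds in UMD spaces. First, I would construct from the sequence $\{M_k\}_{k \in \mathbb{Z}}$ a piecewise smooth extension $M : \mathbb{R} \setminus \{0\} \to B(X,Y)$ that interpolates the values $M_k$, for example by setting $M(\xi) = M_k + (\xi - k)(M_{k+1} - M_k)$ on each interval $[k, k+1)$ with $k \neq 0$ and using a suitable cutoff near the origin. The statement then reduces to verifying R-boundedness of the sets $\{M(\xi)\}_{\xi \neq 0}$ and $\{\xi M'(\xi)\}_{\xi \neq 0}$, which are precisely the hypotheses of the Weis continuous multiplier theorem.

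Second, I would verify these two R-boundedness conditions from the two assumptions. R-boundedness of $\{M(\xi) : \xi \in \mathbb{R}\}$ follows blockwise from Kahane's contraction principle together with R-boundedness of $\{M_k\}_{k \in \mathbb{Z}}$: on each $[k,k+1)$, $M(\xi)$ is an affine combination of $M_k$ and $M_{k+1}$ with coefficients in $[0,1]$. On the same intervals, for $|k|\geq 1$, we have $M'(\xi)= M_{k+1} - M_k$, so that $\xi M'(\xi) = (\xi/k)\cdot k(M_{k+1} - M_k)$ with the scalar factor bounded between fixed positive constants; one more application of Kahane's contraction principle combined with R-boundedness of $\{k(M_{k+1} - M_k)\}_{k \in \mathbb{Z}}$ yields R-boundedness of $\{\xi M'(\xi)\}$.

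Third, applying the continuous operator-valued Mihlin--Weis theorem, $M$ is a Fourier multiplier from $L^p(\mathbb{R};X)$ to $L^p(\mathbb{R};Y)$. A de Leeuw type transference argument, adapted to the UMD-valued setting, then transfers this back to the torus, restricting the multiplier symbol from $\mathbb{R}$ to $\mathbb{Z}$ and yielding that $\{M_k\}_{k \in \mathbb{Z}}$ is an $L^p$-multiplier on $\mathbb{T}$ in the sense of Definition 2.4.

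The hard part is the continuous operator-valued Mihlin--Weis theorem itself: its proof proceeds via a Littlewood--Paley decomposition in which the UMD property of both $X$ and $Y$ upgrades the scalar square function estimates to their vector-valued counterparts, and R-boundedness (which is strictly stronger than uniform boundedness in non-Hilbertian Banach spaces) is exactly the condition that makes the dyadic pieces sum together correctly in the UMD setting. Modulo that deep result, the verification of the two hypotheses for $M$ and the transference from $\mathbb{R}$ to $\mathbb{T}$ are relatively routine applications of Kahane's contraction principle and classical periodization techniques.
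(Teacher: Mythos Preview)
The paper does not give its own proof of this theorem: it is stated without proof as the operator-valued Marcinkiewicz multiplier theorem of Arendt and Bu (reference~[1] in the paper), and is used only as a black box in the subsequent arguments. There is therefore nothing in the paper to compare your proposal against.

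As to the viability of your sketch on its own merits: the strategy of extending the symbol to $\mathbb{R}$, invoking the continuous Mihlin--Weis theorem, and then transferring back to $\mathbb{T}$ via an operator-valued de~Leeuw argument is a legitimate alternative to the original Arendt--Bu proof, which works directly on the torus through a discrete Littlewood--Paley decomposition and Bourgain's UMD square-function estimates. Two technical points deserve care. First, the piecewise-linear interpolant you describe is only Lipschitz, not $C^1$, so you need either a version of Weis's theorem that tolerates a.e.\ derivatives (such versions exist) or a further smoothing at the integer nodes; both fixes are routine. Second, the operator-valued de~Leeuw transference from $L^p(\mathbb{R};X)\to L^p(\mathbb{R};Y)$ multipliers to $L^p(\mathbb{T};X)\to L^p(\mathbb{T};Y)$ multipliers is true but is itself a nontrivial ingredient that should be cited or established separately; it does not follow automatically from the scalar de~Leeuw theorem. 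With those two points handled, your outline is a correct route to the result, though arguably it trades one deep theorem (the discrete Marcinkiewicz theorem) for another of comparable depth (the continuous Weis theorem plus transference).
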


\begin{thm}\label{t2}$( Fejer's \  theorem)$\\
Let $f \in L^{p}(\mathbb{T},X)$. Then $$f = \lim_{n \rightarrow \infty}\sigma_{n}(f)$$ in $L^{p}(\mathbb{T},X)$ where 
$$\sigma_{n}(f):=\frac{1}{n+1}\sum_{m=0}^{n}\sum_{k=-m}^{m}e_{k}\hat{f}(k)$$ with $e_{k}(t):=e^{ikt}$.
\end{thm}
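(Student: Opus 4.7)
The plan is to express $\sigma_n(f)$ as the convolution of $f$ with the classical Fej\'er kernel and then apply the standard approximate-identity argument, noting that everything in the scalar-valued proof carries over to the vector-valued Bochner setting because the kernel is scalar.

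First I would rewrite $\sigma_n(f)$ as a convolution. The inner partial sum $S_m(f)(t) := \sum_{k=-m}^{m} e^{ikt}\hat{f}(k)$ equals $(D_m * f)(t)$, where $D_m(s)=\sum_{k=-m}^{m} e^{iks}$ is the Dirichlet kernel and convolution is given by $(g*f)(t)=\frac{1}{2\pi}\int_0^{2\pi} g(t-s)f(s)\,ds$. Averaging gives $\sigma_n(f) = F_n * f$ with $F_n = \frac{1}{n+1}\sum_{m=0}^{n} D_m$, the Fej\'er kernel, which has the closed form $F_n(t) = \frac{1}{n+1}\bigl(\frac{\sin((n+1)t/2)}{\sin(t/2)}\bigr)^{2}$. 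From this formula one reads off the three properties making it an approximate identity: $F_n \geq 0$, $\frac{1}{2\pi}\int_0^{2\pi} F_n(t)\,dt = 1$, and for every $\delta\in (0,\pi)$, $\sup_{\delta\leq |t|\leq \pi} F_n(t) \leq \frac{1}{(n+1)\sin^{2}(\delta/2)} \to 0$.

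Next I would prove $F_n * f \to f$ in $L^{p}(\mathbb{T},X)$ first for $f \in C(\mathbb{T},X)$. Writing $(F_n * f)(t) - f(t) = \frac{1}{2\pi}\int_{0}^{2\pi} F_n(s)\bigl(f(t-s) - f(t)\bigr)\,ds$ and splitting the integration domain into $|s|<\delta$ and $\delta\leq |s|\leq \pi$, the first region is controlled by the uniform continuity of $f$ together with $\|F_n\|_{1}=1$, while the second is controlled by the pointwise decay of $F_n$ off the origin combined with $\sup_{t}\|f(t)\|<\infty$. This gives uniform convergence, hence convergence in $L^{p}$.

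Finally I would extend to general $f\in L^{p}(\mathbb{T},X)$ by density of $C(\mathbb{T},X)$ in $L^{p}(\mathbb{T},X)$, using the uniform bound $\|F_n * g\|_{p} \leq \|F_n\|_{1}\|g\|_{p} = \|g\|_{p}$ (Young's inequality, which for the Bochner integral follows from Minkowski's integral inequality applied to the scalar, nonnegative kernel $F_n$). A standard $\varepsilon/3$ argument then yields $\|\sigma_n(f) - f\|_{p} \to 0$. The main technical point to verify carefully is that Young/Minkowski and the density argument transfer to $X$-valued functions; both are routine for Bochner-integrable functions against a scalar kernel, so no UMD or $R$-boundedness hypothesis is needed here, which is consistent with the statement holding for every $1 \leq p < \infty$ and arbitrary Banach space $X$.
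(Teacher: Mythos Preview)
Your argument is correct: expressing $\sigma_n(f)$ as convolution with the Fej\'er kernel and invoking the approximate-identity mechanism is the standard proof, and you have correctly observed that every step (Young/Minkowski, density of $C(\mathbb{T},X)$ in $L^{p}(\mathbb{T},X)$, the $\varepsilon/3$ argument) transfers verbatim to Bochner-integrable functions because the kernel is scalar and nonnegative.

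As for comparison with the paper: there is nothing to compare. The paper simply states Fej\'er's theorem as a known preliminary result and provides no proof at all; it is quoted only so that it can be invoked later in the proof of Theorem~\ref{um}. So your write-up is not an alternative to the paper's argument but rather a self-contained justification of a result the paper takes for granted.
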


\begin{lem}\label{lem2} {\bf \color{green}{\cite{1}}}.
Let $f,g \in L^{p}(\mathbb{T}; X)$. If $\hat{f}(k) \in D(A)$ and $A\hat{f}(k) = \hat{g}(k)$ for all $k \in \mathbb{Z}$ Then $$f(t) \in D(A) \;\;\text{and}\;\; Af(t) =g(t)\;\; \text{for all }\;\;t \in [0, 2\pi].$$
\end{lem}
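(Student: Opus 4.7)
The plan is to reduce the statement to a closedness argument via Fejér summation, since the hypothesis is phrased entirely in terms of Fourier coefficients and the conclusion is a pointwise membership/identity in the graph of $A$.

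First I would form the Fejér sums from Theorem \ref{t2}, namely
\[
\sigma_n(f)(t)=\frac{1}{n+1}\sum_{m=0}^{n}\sum_{k=-m}^{m}e^{ikt}\hat f(k),
\qquad
\sigma_n(g)(t)=\frac{1}{n+1}\sum_{m=0}^{n}\sum_{k=-m}^{m}e^{ikt}\hat g(k).
\]
Each $\sigma_n(f)(t)$ is a finite linear combination of the vectors $\hat f(k)$, which by hypothesis all belong to $D(A)$. Since $D(A)$ is a linear subspace and $A$ is linear, this gives $\sigma_n(f)(t)\in D(A)$ for every $t$ and every $n$, with
\[
A\sigma_n(f)(t)=\frac{1}{n+1}\sum_{m=0}^{n}\sum_{k=-m}^{m}e^{ikt}A\hat f(k)=\frac{1}{n+1}\sum_{m=0}^{n}\sum_{k=-m}^{m}e^{ikt}\hat g(k)=\sigma_n(g)(t).
\]

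Next I would invoke Fejér's theorem (Theorem \ref{t2}) applied separately to $f$ and $g$, which gives $\sigma_n(f)\to f$ and $\sigma_n(g)\to g$ in $L^{p}(\mathbb{T},X)$. Passing to a common subsequence $n_j$, I may assume both convergences hold pointwise almost everywhere on $[0,2\pi]$. Along this subsequence I then have $\sigma_{n_j}(f)(t)\to f(t)$ and $A\sigma_{n_j}(f)(t)=\sigma_{n_j}(g)(t)\to g(t)$ for a.e.\ $t$.

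Finally, because $A$ is a closed operator, the pair $(\sigma_{n_j}(f)(t),A\sigma_{n_j}(f)(t))$ lies in the graph of $A$ for each $j$, and the limit $(f(t),g(t))$ must also lie in the graph. This yields $f(t)\in D(A)$ with $Af(t)=g(t)$ for a.e.\ $t\in[0,2\pi]$, which is the content of the lemma (understood in the usual a.e.\ sense proper to elements of $L^{p}(\mathbb{T},X)$). There is no serious obstacle here: the only subtlety is the pointwise/a.e.\ distinction, handled by the standard extraction of an a.e.\ convergent subsequence from an $L^{p}$-convergent one, after which closedness of $A$ closes the argument.
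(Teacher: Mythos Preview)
Your argument is correct and is essentially the standard one: Fej\'er approximation places the partial sums in $D(A)$ with $A\sigma_n(f)=\sigma_n(g)$, $L^p$-convergence yields a.e.\ convergence along a subsequence, and closedness of $A$ finishes. Note, however, that the paper does not supply its own proof of this lemma; it is simply quoted from \cite{1} (Arendt--Bu), where the proof given is precisely this Fej\'er-plus-closedness argument, so your proposal coincides with the intended source proof.
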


\section{Sufficient Conditions For the Periodic solutions of Eq. {\bf \color{red}{\eqref{e2}}}}
In this section, we will give conditions which guarantee the periodic solution of the some second differential equation.
We denote by\\
$H^{1,p}(\mathbb{T}; X) = \left\{ u \in L^{p}(\mathbb{T}, X)   : \exists  v \in  L^{p}(\mathbb{T}, X) , \hat{v}(k) = ik \hat{u}(k) for\   \   all\  \  k \in \mathbb{Z}       \right\}$\\
$H^{2,p}(\mathbb{T}; X) = \left\{ u \in L^{p}(\mathbb{T}, X)   : \exists  v \in  L^{p}(\mathbb{T}, X) , \hat{v}(k) = -k^{2} \hat{u}(k) for\   \   all\  \  k \in \mathbb{Z}   \right\}$\\
$H^{3,p}(\mathbb{T}; X) = \left\{ u \in L^{p}(\mathbb{T}, X)   : \exists  v \in  L^{p}(\mathbb{T}, X) , \hat{v}(k) = -ik^{3} \hat{u}(k) for\   \   all\  \  k \in \mathbb{Z}   \right\}$\\
\begin{defi} {\bf \color{green}{\cite{14}}}\\
For $1 \leq p < \infty$, we say that a sequence $\left\{M_{k}\right\}_{k \in \mathbb{Z}} \subset \mathbf{B}(X, Y)$ is an ($L^{p}, H^{3,p}$)-multiplier, if for each $f \in L^{p}(\mathbb{T}, X)$
there exists $u \in H^{3,p}(\mathbb{T}, Y)$ such that $\hat{u}(k) = M_{k}\hat{f}(k)\ \ \text{ for all}\ \ k \in \mathbb{Z}.$
\end{defi}

\begin{lem}\label{l1} {\bf \color{green}{\cite{1}}}\\
Let  $1 \leq p < \infty$ and $(M_{k})_{k \in \mathbb{Z}} \subset  \mathbf{B}(X)\  \ (\mathbf{B}(X)$ is the set of all bounded linear operators from $X$ to $X$). Then the following assertions are equivalent:\\
(i) $(M_{k})_{k \in \mathbb{Z}}$  is an ($L^{p}, H^{1,p}$)-multiplier.\\
(ii) $(ikM_{k})_{k \in \mathbb{Z}}$ is an ($L^{p}, L^{p}$)-multiplier.
\end{lem}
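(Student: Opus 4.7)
The plan is to prove the two implications separately, with (i)$\Rightarrow$(ii) being essentially a definition chase and (ii)$\Rightarrow$(i) requiring an explicit antidifferentiation construction.

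For (i)$\Rightarrow$(ii): given $f \in L^p(\mathbb{T},X)$, the hypothesis produces $u \in H^{1,p}(\mathbb{T},X)$ with $\hat{u}(k) = M_k \hat{f}(k)$. By the very definition of $H^{1,p}$ there exists $v \in L^p(\mathbb{T},X)$ satisfying $\hat{v}(k) = ik\hat{u}(k) = ikM_k\hat{f}(k)$, which is exactly the statement that $(ikM_k)_{k\in\mathbb{Z}}$ is an $(L^p,L^p)$-multiplier.

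For the converse (ii)$\Rightarrow$(i), fix $f \in L^p(\mathbb{T},X)$ and obtain $v \in L^p(\mathbb{T},X)$ with $\hat{v}(k) = ikM_k\hat{f}(k)$. The crucial observation is that $\hat{v}(0)=0$, so the antiderivative
\[
w(t) := \int_0^t v(s)\,ds
\]
satisfies $w(2\pi) = 2\pi\,\hat{v}(0) = 0 = w(0)$ and thus extends to a continuous $2\pi$-periodic function on $\mathbb{R}$, in particular $w \in L^p(\mathbb{T},X)$. An integration by parts then yields $\hat{w}(k) = \tfrac{1}{ik}\hat{v}(k) = M_k\hat{f}(k)$ for all $k\neq 0$. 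To repair the zero mode, set
\[
u(t) := w(t) - \hat{w}(0) + M_0\hat{f}(0),
\]
which is still in $L^p(\mathbb{T},X)$ and satisfies $\hat{u}(k) = M_k\hat{f}(k)$ for every $k \in \mathbb{Z}$. Since $\hat{v}(k) = ikM_k\hat{f}(k) = ik\hat{u}(k)$ holds trivially at $k=0$ as well, $u$ belongs to $H^{1,p}(\mathbb{T},X)$, proving (i).

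The only subtle point, and the step I expect to be the main pitfall, is the periodicity of the antiderivative $w$. This is not automatic for an arbitrary $v \in L^p(\mathbb{T},X)$ and relies entirely on the identity $\hat{v}(0) = i\cdot 0 \cdot M_0\hat{f}(0)=0$, which the formula $\hat{v}(k) = ikM_k\hat{f}(k)$ forces at $k=0$; without this, $w$ would have a linear drift and fail to lie in $L^p(\mathbb{T},X)$. Everything else (the integration by parts, the constant-shift correction at the zero frequency, membership in $L^p$ via continuity of $w$) is routine.
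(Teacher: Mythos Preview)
The paper does not supply its own proof of this lemma; it is simply quoted from Arendt--Bu \cite{1}. Your argument is correct and is in fact the standard one used in that reference: the implication (i)$\Rightarrow$(ii) is an unwinding of the definition of $H^{1,p}$, and for (ii)$\Rightarrow$(i) one antidifferentiates $v$, uses $\hat v(0)=0$ to obtain periodicity of the primitive, and adjusts the zero Fourier mode by a constant. There is nothing to add.
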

We define 
$$D_{A} : = \frac{d^{3}}{dt^{3}} - A$$
$$\Delta_{k} = (-ik^{3}I + A )\;\; and\;\; \sigma_{\mathbb{Z}}(\Delta) = \left\{ k\in \mathbb{Z} : \Delta_{k} \  is  \   not \  bijective    \right\}$$
We begin by establishing our concept of strong solution for Eq. ${\bf \color{red}{\eqref{e2}}}$

\begin{defi}
Let $f \in L^{p}(\mathbb{T}; X)$. A function $x \in H^{3,p}(\mathbb{T}; X)$ is said to be a $2\pi$-periodic strong $L^{p}$-solution of 
Eq.{\bf \color{red}{\eqref{e2}}} if $x(t) \in  D(A)$ for all $t \geq 0$ and Eq. {\bf \color{red}{\eqref{e2}}} holds almost every where.
\end{defi}

\begin{pr} Let $A$ be a closed linear operator defined on an UMD space $X$. Suppose that \\$\sigma_{\mathbb{Z}}(\Delta) = \phi$ .Then the following assertions are equivalent :
\begin{description}
\item[(i)] $\left(-ik^{3}(-ik^{3}I + A )^{-1}\right)_{k \in \mathbb{Z}}$ is an $L^{p}$-multiplier for $1 < p < \infty$
\item[(ii)] $\left( -ik^{3}(-ik^{3}I + A)^{-1}\right)_{k \in \mathbb{Z}}$ is $R$-bounded.
\end{description}
\end{pr}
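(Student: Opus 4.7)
My plan is to split the equivalence. The direction $(i)\Rightarrow(ii)$ is immediate: Proposition~\ref{p} asserts that every operator-valued $L^{p}$-multiplier is $R$-bounded, so nothing more is needed. All the substance lies in $(ii)\Rightarrow(i)$, which I will handle by the Marcinkiewicz operator-valued multiplier theorem (Theorem~\ref{t1}). Because $X$ is UMD and the family $M_{k}:=-ik^{3}\Delta_{k}^{-1}$ is $R$-bounded by hypothesis, it is enough to show that the companion family $\{k(M_{k+1}-M_{k})\}_{k\in\mathbb{Z}}$ is also $R$-bounded.

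The strategy for the companion family is to reduce $M_{k+1}-M_{k}$ to a product of operators drawn from $\{M_{j}\}$ itself, multiplied by a bounded scalar. From $\Delta_{k}\Delta_{k}^{-1}=I$ I will read off $A\Delta_{k}^{-1}=I-M_{k}$, hence $M_{k}=I-A\Delta_{k}^{-1}$. The resolvent identity, combined with $\Delta_{k}-\Delta_{k+1}=i((k+1)^{3}-k^{3})I=i(3k^{2}+3k+1)I$, should then yield
\[
M_{k+1}-M_{k} = -i(3k^{2}+3k+1)(I-M_{k+1})\Delta_{k}^{-1}.
\]
Substituting $\Delta_{k}^{-1}=(i/k^{3})M_{k}$ (for $k\neq 0$), the cubic in the numerator partially cancels $k^{3}$, and after multiplying by an additional factor of $k$ I expect to arrive at
\[
k(M_{k+1}-M_{k}) = \frac{3k^{2}+3k+1}{k^{2}}\,(I-M_{k+1})M_{k},
\]
with the scalar coefficient $3+3/k+1/k^{2}$ uniformly bounded for $|k|\geq 1$. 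The case $k=0$ is trivial because the left-hand side vanishes.

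Once this representation is in hand, $R$-boundedness of $\{k(M_{k+1}-M_{k})\}$ follows from standard permanence properties: sums and products of $R$-bounded families are $R$-bounded, and multiplication by a bounded scalar sequence preserves $R$-boundedness. Theorem~\ref{t1} then delivers the $L^{p}$-multiplier property, closing the equivalence. I do not foresee a genuine obstacle; the only point demanding care is the algebraic identification above and the mild bookkeeping at $k=0$ (and implicitly $k=-1$, where $M_{k+1}=0$ but the formula still reproduces the correct value $M_{-1}$).
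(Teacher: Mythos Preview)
Your proposal is correct and follows essentially the same route as the paper: the direction $(i)\Rightarrow(ii)$ via Proposition~\ref{p}, and $(ii)\Rightarrow(i)$ via Theorem~\ref{t1} after an algebraic rewriting of $k(M_{k+1}-M_k)$ through the resolvent identity. The only cosmetic difference is the symmetric choice of factorization: the paper substitutes $N_{k+1}=\tfrac{i}{(k+1)^{3}}M_{k+1}$ and $AN_{k}=I-M_{k}$ to obtain $\tfrac{k^{3}}{(k+1)^{3}}\,(-(3+3/k+1/k^{2}))\,M_{k+1}(I-M_{k})$, whereas you substitute $A\Delta_{k+1}^{-1}=I-M_{k+1}$ and $\Delta_{k}^{-1}=\tfrac{i}{k^{3}}M_{k}$ to obtain $(3+3/k+1/k^{2})\,(I-M_{k+1})M_{k}$; both lead to an $R$-bounded family by the same permanence properties.
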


\begin{proof}
 (i) $\Rightarrow$ (ii) As a consequence of Proposition {\bf \color{red}{(\ref{p})}}\\
(ii) $\Rightarrow$ (i) Define $M_{k} = -ik^{3} N_{k}$ where $N_{k}=(-ik^{3}I + A)^{-1}$. By Marcinkiewcz Theorem  it  is sufficient to prove that the set $\left\{k(M_{k+1}-M_{k})\right\}_{k \in \mathbb{Z}}$ is $R$-bounded. Since
\begin{align*}
k\left[ M_{k+1} - M_{k}\right]&= k[ - i(k+1)^{3}(-i(k+1)^{3}I + A)^{-1} + ik^{3}(-ik^{3}I + A)^{-1}]\\
&=k [- i(k+1)^{2} N_{k+1} + ik^{2} N_{k}]\\
&=kN_{k+1}[- i(k+1)^{3}((-k^{3}I + A))) + ik^{3}((-i(k+1)^{3}I + A)]N_{k}\\
&=kN_{k+1}[-i(k+1)^{3}A + ik^{3}A)]N_{k}\\
&=ikN_{k+1}[(k^{3}-(k+1)^{3})A]N_{k}\\
&=ikN_{k+1}[-(3k^{2} + 3k + 1)A]N_{k}\\
&=ik^{3}N_{k+1}[-(3 + 3\frac{1}{k} + \frac{1}{k^{2}})](I + ik^{3}N_{k})\\
&=\frac{k^{3}}{(k+1)^{3}}M_{k+1}[-(3 + 3\frac{1}{k} + \frac{1}{k^{2}})](I - M_{k})\\
 \end{align*}
Since products and sums of $R$-bounded sequences is $R$-bounded [{\bf \color{green}{\cite{14}}}. Remark 2.2]. Then the proof is complete.
\end{proof}

\begin{lem}
Let $1 \leq p < \infty$. Suppose that $\sigma_{\mathbb{Z}}(\Delta) = \phi$ and $D_{A}$ is surjective. Then $D_{A}$ is bijective.
\end{lem}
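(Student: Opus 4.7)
The plan is to establish injectivity, since surjectivity of $D_A$ is assumed. I would pick $x\in H^{3,p}(\mathbb{T},X)\cap L^{p}(\mathbb{T},D(A))$ with $D_A x=0$, i.e.\ $x'''(t)=Ax(t)$ almost everywhere, and show that $x=0$ using the Fourier characterization of $H^{3,p}$ together with the hypothesis $\sigma_{\mathbb{Z}}(\Delta)=\phi$.

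Concretely, I would take the $k$-th Fourier coefficient of the identity $x'''(t)=Ax(t)$. The membership $x\in H^{3,p}(\mathbb{T},X)$ yields $\widehat{x'''}(k)=-ik^{3}\hat{x}(k)$. For the right-hand side, since $x(t)\in D(A)$ a.e.\ and $Ax\in L^{p}(\mathbb{T},X)$, the closedness of $A$ lets one pull $A$ through the Bochner integral defining $\hat{x}(k)=\frac{1}{2\pi}\int_{0}^{2\pi}e^{-ikt}x(t)\,dt$ and conclude that $\hat{x}(k)\in D(A)$ with $A\hat{x}(k)=\widehat{Ax}(k)$. Equating the two sides gives, for every $k\in\mathbb{Z}$,
\[
(A+ik^{3}I)\hat{x}(k)=0,
\]
which, since $A+ik^{3}I=-i(-k)^{3}I+A=\Delta_{-k}$, reads $\Delta_{-k}\hat{x}(k)=0$.

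The hypothesis $\sigma_{\mathbb{Z}}(\Delta)=\phi$ means that every $\Delta_m$ is bijective, and in particular injective; thus $\hat{x}(k)=0$ for every $k\in\mathbb{Z}$. By uniqueness of Fourier coefficients (equivalently, Fejer's Theorem~\ref{t2}, since each partial sum $\sigma_{n}(x)$ then vanishes), we conclude $x=0$ in $L^{p}(\mathbb{T},X)$. Combined with the assumed surjectivity, this gives bijectivity of $D_A$.

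The only non-algebraic step is the commutation $\widehat{Ax}(k)=A\hat{x}(k)$, which is standard from closedness of $A$ combined with Bochner integration (this is Lemma~\ref{lem2} read in reverse); I expect this to be the only place where care is required, and it is routine. Beyond it, the argument is purely algebraic in Fourier space and uses only the injectivity half of $\sigma_{\mathbb{Z}}(\Delta)=\phi$.
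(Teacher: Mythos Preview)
Your proposal is correct and follows essentially the same route as the paper: both arguments reduce injectivity to showing that $D_{A}z=0$ forces $\Delta_{m}\hat z(k)=0$ on Fourier coefficients, and then use $\sigma_{\mathbb{Z}}(\Delta)=\phi$ together with uniqueness of Fourier coefficients to conclude $z=0$. Your version is in fact a bit more careful than the paper's (you justify the commutation $\widehat{Ax}(k)=A\hat x(k)$ via closedness of $A$ and you correctly track that one lands on $\Delta_{-k}$ rather than $\Delta_{k}$), but these are refinements of the same argument, not a different approach.
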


\begin{proof} We have $D_{A}$ is surjective the $H^{3, p}(\mathbb{T}, X)$ to $L^{p}(\mathbb{T}, X)$. Then 
$$\forall f \in L^{p}(\mathbb{T}, X) \ \exists z \in H^{3, p}(\mathbb{T}, X) \ \text{such that } \ D_{A}z = f$$
Suppose that there exists $z_{1}$ and $z_{2}$ such that $D_{A}z_{1} = f$ and $D_{A}z_{2} = f$.  then for $ z  = z_{1} - z_{2} $
we have $D_{A}z = 0$. Taking Fourier transform, we obtain that
$$(-ik^{3} + A)\hat{z}(k) = 0, k \in \mathbb{Z}.$$ i.e $$\Delta_{k}\hat{z}(k)=0$$
It follows that $\hat{z}(k) = 0$ for every $k \in \mathbb{Z}$ and therefore $ z = 0 $. Then 
$ z_{1} = z_{2} $ and $D_{A}$ is bijective.
\end{proof}

\begin{thm}\label{t}
Let $X$ be a Banach space. Suppose that  the operator $D_{A}:= \frac{d^{3}}{dt^{3}} - A$ is an isomorphism of $H^{3,p}(\mathbb{T}, X)$ onto $L^{p}(\mathbb{T}, X)$ for $1 \leq p <  \infty$. Then
\begin{enumerate}
\item for every $k \in \mathbb{Z}$ the operator $\Delta_{k}=(-ik^{3}I + A)$ has bijective,
\item  $\left\{ -ik^{3} \Delta^{-1}_{k}    \right\}_{k \in \mathbb{Z}}$ is $R$-bounded.
\end{enumerate}
\end{thm}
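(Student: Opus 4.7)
The plan is to leverage the assumed isomorphism of $D_A$ by testing it against elementary exponential functions $e^{ikt}x$ and then to reinterpret the bounded inverse $D_A^{-1}$ as an operator-valued multiplier on $L^p(\mathbb{T},X)$.

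\textbf{Step 1 (Injectivity of $\Delta_k$).} Fix $k\in\mathbb{Z}$ and suppose $x\in D(A)$ satisfies $\Delta_k x=0$. Set $u(t):=e^{ikt}x$. Then $u$ has a single nonzero Fourier coefficient, so it lies in $H^{3,p}(\mathbb{T},X)$ (with $u'''(t)=-ik^{3}e^{ikt}x$), and since $x\in D(A)$ we also have $u\in L^p(D(A),X)$. A direct computation using $\Delta_k x=(-ik^{3}I+A)x=0$ shows $D_A u \equiv 0$ (up to the sign dictated by the Fourier convention), so the injectivity of $D_A$ forces $x=0$.

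\textbf{Step 2 (Surjectivity of $\Delta_k$).} Given $y\in X$ and $k_{0}\in\mathbb{Z}$, set $f(t):=e^{ik_{0}t}y\in L^p(\mathbb{T},X)$. Since $D_A$ is surjective, pick $u\in H^{3,p}(\mathbb{T},X)\cap L^p(D(A),X)$ with $D_A u=f$. Applying the Fourier transform and using Lemma \ref{lem2} to move the closed operator $A$ past $\hat{\cdot}$, we obtain $\Delta_k\hat{u}(k)=\pm\hat{f}(k)$ for every $k\in\mathbb{Z}$. For $k\neq k_{0}$, the right-hand side vanishes, and the injectivity proved in Step 1 yields $\hat{u}(k)=0$. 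For $k=k_{0}$, we get $\Delta_{k_{0}}\hat{u}(k_{0})=\pm y$, so $\Delta_{k_{0}}$ is surjective. Together with Step 1 this gives assertion (1); bijectivity of the closed operator $\Delta_k$ automatically yields a bounded inverse by the closed graph theorem.

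\textbf{Step 3 (R-boundedness of $\{-ik^{3}\Delta_k^{-1}\}$).} The hypothesis that $D_A$ is an isomorphism means the linear map $T: L^p(\mathbb{T},X)\to L^p(\mathbb{T},X)$ defined by $Tf:=(D_A^{-1}f)'''$ is bounded. For any $f\in L^p(\mathbb{T},X)$, writing $u:=D_A^{-1}f$ and taking Fourier coefficients gives $\hat{u}(k)=\pm\Delta_k^{-1}\hat{f}(k)$, so
\[
\widehat{Tf}(k)=-ik^{3}\hat{u}(k)=\mp ik^{3}\Delta_k^{-1}\hat{f}(k).
\]
Thus $\{-ik^{3}\Delta_k^{-1}\}_{k\in\mathbb{Z}}$ (or its negative) is an $L^p$-multiplier, and by Proposition \ref{p} the family is $R$-bounded, proving assertion (2).

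The main obstacle is bookkeeping rather than a deep difficulty: one must make the Fourier sign convention consistent with the definition of $\Delta_k$, and one must justify the exchange of the closed operator $A$ with the Fourier transform, for which Lemma \ref{lem2} is precisely the right tool. Once these steps are in place, the isomorphism hypothesis is used twice — once in the injective direction (to kill the test functions $e^{ikt}x$) and once in the surjective direction (both to solve $\Delta_k x=y$ via an exponential forcing term, and to read off the multiplier bound from the boundedness of $D_A^{-1}$).
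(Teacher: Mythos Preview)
Your argument is correct and follows essentially the same route as the paper: test $D_A$ against exponentials $e^{ikt}x$ to obtain injectivity and surjectivity of $\Delta_k$ (the paper isolates your Step~1 computation as a separate lemma), and then read off the $L^p$-multiplier property of $\{-ik^{3}\Delta_k^{-1}\}$ from the existence of $u'''\in L^p$ for $u=D_A^{-1}f$, invoking Proposition~\ref{p} for $R$-boundedness. Your additional care with the sign convention, the explicit appeal to Lemma~\ref{lem2} to commute $A$ with Fourier coefficients, and the closed graph remark are welcome clarifications not spelled out in the paper.
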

Before to give the proof of Theorem {\bf \color{red}{(\ref{t})}}, we need the following Lemma.

\begin{lem}\label{l2}
if $x \in Ker \Delta_{k}$, then $e^{ikt}x \in Ker D_{A}$
\end{lem}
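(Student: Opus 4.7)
The plan is to exhibit $u(t) := e^{ikt}x$ as the desired element of $\ker D_{A}$ and verify $D_{A}u \equiv 0$ by a direct pointwise calculation. The statement is purely algebraic in character — no multiplier theory, no Fej\'er argument, and no closedness of $A$ beyond its linearity on the one-dimensional subspace $\mathbb{C}\cdot x$ — so the proof should amount to a single short display.

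First I would unpack the hypothesis $x \in \ker\Delta_{k}$. By definition this forces $x \in D(A)$ and pins $Ax$ down as an explicit scalar multiple of $x$, read off from the relation $\Delta_{k}x = 0$. This one algebraic identity is the engine of the cancellation that follows. Second, set $u(t) := e^{ikt}x$. Since $x$ is a constant vector in $D(A)$ and $t \mapsto e^{ikt}$ is a smooth scalar function, $u$ is smooth, $2\pi$-periodic, takes its values in $D(A)$, and differentiation term by term gives
\[
u'''(t) = (ik)^{3} e^{ikt} x = -ik^{3} e^{ikt} x .
\]
By linearity of $A$ (the scalar $e^{ikt}$ commutes past $A$), $A u(t) = e^{ikt} Ax$. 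Substituting into $D_{A} u = u''' - A u$ leaves
\[
D_{A} u(t) \;=\; e^{ikt}\bigl( -ik^{3} x \,-\, A x \bigr).
\]

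The last step is to use $\Delta_{k} x = 0$ to evaluate the bracket and conclude it is zero, whence $D_{A} u \equiv 0$ and $e^{ikt} x \in \ker D_{A}$, as required. The only point that genuinely requires attention is sign bookkeeping: the Fourier symbol of $D_{A} = \tfrac{d^{3}}{dt^{3}} - A$ is $(ik)^{3} - A = -ik^{3} I - A$, so for the cancellation above to close cleanly the hypothesis must read $Ax = -ik^{3} x$; this in turn dictates the sign convention for $\Delta_{k}$ in the definition. I expect no other obstacle — once the sign conventions for $\Delta_{k}$ and $D_{A}$ are aligned, the lemma follows from this one-line computation.
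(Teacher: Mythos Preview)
Your proposal is correct and follows exactly the paper's approach: set $z(t)=e^{ikt}x$, compute $z'''(t)=-ik^{3}e^{ikt}x$, use linearity of $A$ to get $Az(t)=e^{ikt}Ax$, and invoke the kernel condition to cancel. Your caveat about sign bookkeeping is well placed---the paper itself reads off $Ax=-ik^{3}x$ from $x\in\ker\Delta_{k}$, which only matches its stated definition $\Delta_{k}=-ik^{3}I+A$ up to a sign, so the inconsistency you flag is genuinely present in the source.
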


\begin{proof} $x \in Ker \Delta_{k} \Rightarrow -ik^{3}x = Ax$.\\
Put $z(t) = e^{ikt}x$, then 
\begin{align*}
z'''(t) &= (ike^{ikt}x)''\\
&=(- k^{2}e^{ikt}x)'\\
&=-ik^{3}e^{ikt}x\\
&= e^{ikt}Ax\\
&= Az(t)\\
&\Rightarrow D_{A}z(t) = 0\\
&\Rightarrow e^{ikt}x \in Ker D_{A}.
\end{align*}
\textbf{Proof of Theorem {\bf \color{red}{(\ref{t})}}}:
1) Let $k \in \mathbb{Z}$ and $y \in X$. Then for $f(t) = e^{ikt}y$ , there exists $z \in H^{3, p}(\mathbb{T}; X)$ such that:

$$D_{A} z(t) = f(t)$$
Taking Fourier transform, we deduce that:\\
$(-ik^{3} +  A)\hat{z}(k) = \hat{f}(k)= y \Rightarrow \Delta_{k} = (-ik^{3} +  A)$ is surjective.\\
Let $u \in Ker \Delta_{k}$. By Lemma {\bf \color{red}{\ref{l2}}}, we have $e^{ikt}u \in Ker D_{A}$, then $u = 0$ and  $(-ik^{3} +  A)$ is injective.\\
2) Let $f \in L^{p}(\mathbb{T}; X)$. By hypothesis, there exists a unique $z \in H^{3,p}(\mathbb{T}, X)$ such that $D_{A}z= f$. Taking Fourier transforms, we deduce that $$\hat{z}(k) =  (-ik^{3} +  A )^{-1} \hat{f}(k)\;\;\text{ for all}\;\; k \in \mathbb{Z}.$$ Hence
$$- ik^{3}\hat{z}(k)= -ik^{3}(-ik^{3} +  A)^{-1} \hat{f}(k) \;\;\text{for all}\;\; k \in \mathbb{Z}$$
Since $z \in H^{3,p}(\mathbb{T}; X),$ then there exists $v \in L^{p}(\mathbb{T}; X)$  such that $$\hat{v}(k)=-ik^{3}\hat{z}(k)=  -ik^{3}(-k^{3} +  A)^{-1}\hat{f}(k).$$ Then $\left\{ -ik^{3}\Delta^{-1}_{k}    \right\}_{k \in \mathbb{Z}}$ is an $L^{p}$-multiplier and $\left\{ -ik^{3}\Delta^{-1}_{k}    \right\}_{k \in \mathbb{Z}}$ is $R$-bounded.
\end{proof}

\section{Main result}
Our main result in this section is to establish that the converse of Theorem {\bf \color{red}{\ref{t}}}, are true, provided $X$ is an UMD space.

\begin{lem}\label{l123}
Let  $1 \leq p < \infty$ and $(M_{k})_{k \in \mathbb{Z}} \subset  \mathbf{B}(X)$. Then the following assertions are equivalent:\\
(i) $(M_{k})_{k \in \mathbb{Z}}$  is an ($L^{p}, H^{2,p}$)-multiplier.\\
(ii) $(-k^{2}M_{k})_{k \in \mathbb{Z}}$ is an ($L^{p}, L^{p}$)-multiplier.
\end{lem}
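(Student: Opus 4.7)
The plan is to imitate the proof of Lemma \ref{l1}, substituting the symbol $-k^{2}$ for $ik$ throughout. The forward implication (i) $\Rightarrow$ (ii) will be immediate from unpacking the definitions: if $u \in H^{2,p}(\mathbb{T}, X)$ satisfies $\hat{u}(k) = M_{k}\hat{f}(k)$, then the defining $v \in L^{p}(\mathbb{T}, X)$ attached to $u$ by membership in $H^{2,p}$ will satisfy $\hat{v}(k) = -k^{2}\hat{u}(k) = -k^{2}M_{k}\hat{f}(k)$, exhibiting $(-k^{2}M_{k})_{k \in \mathbb{Z}}$ as an $(L^{p}, L^{p})$-multiplier.

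For the converse (ii) $\Rightarrow$ (i), given $f \in L^{p}(\mathbb{T}, X)$ I will take $v \in L^{p}(\mathbb{T}, X)$ with $\hat{v}(k) = -k^{2}M_{k}\hat{f}(k)$ from hypothesis (ii), and build $u \in L^{p}(\mathbb{T}, X)$ with $\hat{u}(k) = M_{k}\hat{f}(k)$. Once $u$ is produced, the identity $-k^{2}\hat{u}(k) = \hat{v}(k)$ automatically places $u$ in $H^{2,p}(\mathbb{T}, X)$, which is what (i) demands. To produce $u$, I will convolve $v$ with the scalar kernel
\[
\psi(t) := \sum_{k \in \mathbb{Z}\setminus\{0\}} \frac{e^{ikt}}{k^{2}},
\]
which converges absolutely and uniformly by the Weierstrass $M$-test since $\sum 1/k^{2} < \infty$. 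Hence $\psi$ is continuous on $\mathbb{T}$ and belongs to $L^{1}(\mathbb{T})$, with $\hat{\psi}(k) = 1/k^{2}$ for $k \neq 0$ and $\hat{\psi}(0) = 0$. Young's inequality then yields $\psi \ast v \in L^{p}(\mathbb{T}, X)$, and setting $u := M_{0}\hat{f}(0) - c(\psi \ast v)$, with $c$ chosen according to the convolution normalization in use, will give $\hat{u}(k) = M_{k}\hat{f}(k)$ for every $k \in \mathbb{Z}$, as a short Fourier-coefficient calculation shows.

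The one potentially delicate point is that the statement does not assume $X$ is UMD, so Theorem \ref{t1} is unavailable for inverting the symbol $-k^{2}$ as an operator-valued multiplier. This is precisely why I intend to use the scalar, absolutely summable kernel $\psi$: it gives $L^{p}$-boundedness on any Banach space through Young's inequality, sidestepping the need for randomized boundedness. With this observation in place, the rest of the argument reduces to a routine verification of Fourier coefficients, and I do not anticipate any further obstacles.
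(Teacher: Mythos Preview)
Your argument is correct. The forward implication is exactly the unpacking of the definition you describe, and for the converse the kernel $\psi(t)=\sum_{k\neq 0}e^{ikt}/k^{2}$ lies in $C(\mathbb{T})\subset L^{1}(\mathbb{T})$, so Young's inequality gives $\psi\ast v\in L^{p}(\mathbb{T},X)$ for any Banach space $X$; adjusting by the constant $M_{0}\hat f(0)$ and the normalization constant produces $u$ with $\hat u(k)=M_{k}\hat f(k)$ and $-k^{2}\hat u(k)=\hat v(k)$, hence $u\in H^{2,p}$.

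The paper, however, does not build $u$ directly. It factors $-k^{2}=(ik)(ik)$ and bootstraps from Lemma~\ref{l1}: $(-k^{2}M_{k})$ is an $(L^{p},L^{p})$-multiplier iff $(ik\cdot ikM_{k})$ is, which by Lemma~\ref{l1} is equivalent to $(ikM_{k})$ being an $(L^{p},H^{1,p})$-multiplier; one more application of the same idea then yields that $(M_{k})$ is an $(L^{p},H^{2,p})$-multiplier. So the paper's route is a two-step reduction to the first-order lemma quoted from \cite{1}, while yours is a one-shot construction via an explicit absolutely summable scalar kernel. Your approach has the merit of being self-contained and of making transparent why no UMD hypothesis is needed (the inverse symbol $1/k^{2}$ comes from an $L^{1}$ function, so Young suffices); the paper's approach is shorter on the page but leans on Lemma~\ref{l1} as a black box.
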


\begin{proof}
We have
\begin{align*}
&(-k^{2}M_{k})_{k \in \mathbb{Z}} \ \text{ is an} \ (L^{p}, L^{p})-multiplier \\
&\Leftrightarrow ik(ikM_{k})_{k \in \mathbb{Z}} \  \text{is an} \ (L^{p}, L^{p})-multiplier \\
&\Leftrightarrow (ikM_{k})_{k \in \mathbb{Z}} \ \text{is an} \  (L^{p}, H^{1,p})-multiplier\  (\text{by Lemma} \ref{l1})
\end{align*}
then the proof ase soon as Lemma (\ref{l1}).
\end{proof}

\begin{lem}\label{l122}
Let  $1 \leq p < \infty$ and $(M_{k})_{k \in \mathbb{Z}} \subset  \mathbf{B}(X)$. Then the following assertions are equivalent:\\
(i) $(M_{k})_{k \in \mathbb{Z}}$  is an ($L^{p}, H^{3,p}$)-multiplier.\\
(ii) $(-ik^{3}M_{k})_{k \in \mathbb{Z}}$ is an ($L^{p}, L^{p}$)-multiplier.
\end{lem}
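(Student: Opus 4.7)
The plan is to imitate the argument of Lemma \ref{l123}, but chained one level deeper: the previous lemma passes from $H^{1,p}$ to $H^{2,p}$ by multiplying the symbol by $ik$; to pass to $H^{3,p}$ I will multiply by another $ik$, then recognize the result via Lemma \ref{l1} and Lemma \ref{l123}.

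First I would handle the easy direction (i)$\Rightarrow$(ii): given $f\in L^p(\mathbb{T},X)$, by hypothesis there is $u\in H^{3,p}(\mathbb{T},X)$ with $\hat{u}(k)=M_k\hat{f}(k)$. By the very definition of $H^{3,p}(\mathbb{T},X)$, there exists $v\in L^p(\mathbb{T},X)$ such that $\hat{v}(k)=-ik^3\hat{u}(k)=-ik^3M_k\hat{f}(k)$, so $(-ik^3M_k)_{k\in\mathbb{Z}}$ is an $(L^p,L^p)$-multiplier.

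For the converse (ii)$\Rightarrow$(i), I factor the symbol as $-ik^3 M_k=ik\cdot(-k^2 M_k)$. Setting $N_k:=-k^2 M_k$, the hypothesis says $(ikN_k)_{k\in\mathbb{Z}}$ is an $(L^p,L^p)$-multiplier. Applying Lemma \ref{l1} (in the direction (ii)$\Rightarrow$(i)) yields that $(N_k)_{k\in\mathbb{Z}}=(-k^2M_k)_{k\in\mathbb{Z}}$ is an $(L^p,H^{1,p})$-multiplier. Since $H^{1,p}(\mathbb{T},X)\subset L^p(\mathbb{T},X)$, it is in particular an $(L^p,L^p)$-multiplier. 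Now applying Lemma \ref{l123} (in the direction (ii)$\Rightarrow$(i)) gives that $(M_k)_{k\in\mathbb{Z}}$ is an $(L^p,H^{2,p})$-multiplier, and again in particular an $(L^p,L^p)$-multiplier.

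To finish, I combine the two pieces. For $f\in L^p(\mathbb{T},X)$, the previous step produces $u\in L^p(\mathbb{T},X)$ with $\hat{u}(k)=M_k\hat{f}(k)$, while the hypothesis (ii) produces $v\in L^p(\mathbb{T},X)$ with $\hat{v}(k)=-ik^3M_k\hat{f}(k)=-ik^3\hat{u}(k)$ for all $k\in\mathbb{Z}$. By the defining property of $H^{3,p}(\mathbb{T},X)$, this means $u\in H^{3,p}(\mathbb{T},X)$; hence $(M_k)_{k\in\mathbb{Z}}$ is an $(L^p,H^{3,p})$-multiplier. There is no real obstacle here—the proof is a bookkeeping reduction to the already established Lemmas \ref{l1} and \ref{l123}; the only care needed is to choose the factorization $-ik^3=ik\cdot(-k^2)$ so that each factor lands in the hypothesis of one of those lemmas.
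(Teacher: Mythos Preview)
Your proof is correct. The paper actually omits the proof of Lemma~\ref{l122} entirely, leaving it implicit that the argument is the obvious one-step extension of the proof of Lemma~\ref{l123}; your write-up supplies exactly those details, with the same factorization idea $-ik^{3}=ik\cdot(-k^{2})$ and successive appeals to Lemmas~\ref{l1} and~\ref{l123}, so it is fully in line with the paper's intended approach.
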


\begin{thm}\label{um}
Let $X$ be an UMD space and  $A :D(A) \subset X \rightarrow X$ be an closed linear operator. Then the following assertions are equivalent for $1 <  p < \infty$.
\begin{description}
\item[(1)] The operator $D_{A}:= \frac{d^{3}}{dt^{3}} - A$ is an isomorphism of $H^{3,p}(\mathbb{T}, X) \cap L^{p}(D(A), X)$ onto $L^{p}(\mathbb{T}, X)$.
\item[(2)]  $\sigma_{\mathbb{Z}}(\Delta)= \phi$   and      $\left\{ -ik^{3}\Delta^{-1}_{k}    \right\}_{k \in \mathbb{Z}}$ is $R$-bounded.
\end{description}
\end{thm}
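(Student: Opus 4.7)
The forward implication $(1)\Rightarrow(2)$ is exactly the content of Theorem \ref{t}, so the plan is to concentrate on $(2)\Rightarrow(1)$. Given $f\in L^{p}(\mathbb{T},X)$ I would like to produce $z$ by declaring its Fourier coefficients to be $\widehat{z}(k)=\Delta_{k}^{-1}\widehat{f}(k)$ (this makes sense since $\sigma_{\mathbb{Z}}(\Delta)=\emptyset$). The core of the argument will be to show that $(\Delta_{k}^{-1})_{k\in\mathbb{Z}}$ is an $(L^{p},H^{3,p})$-multiplier; by Lemma \ref{l122} this is equivalent to proving that $M_{k}:=-ik^{3}\Delta_{k}^{-1}$ defines an $L^{p}$-multiplier, which calls for the Marcinkiewicz theorem (Theorem \ref{t1}), the hypothesis being that $X$ is a UMD space.

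To invoke Theorem \ref{t1}, I would need the R-boundedness of both $(M_{k})_{k\in\mathbb{Z}}$ and $(k(M_{k+1}-M_{k}))_{k\in\mathbb{Z}}$. The first is hypothesis (2). For the second, the algebraic identity
\[
k(M_{k+1}-M_{k})=\frac{k^{3}}{(k+1)^{3}}\,M_{k+1}\Bigl[-\Bigl(3+\tfrac{3}{k}+\tfrac{1}{k^{2}}\Bigr)\Bigr](I-M_{k})
\]
already derived in the preceding Proposition writes $k(M_{k+1}-M_{k})$ as a product of scalar factors bounded uniformly in $k$ together with the R-bounded families $(M_{k+1})$ and $(I-M_{k})$; since products and sums of R-bounded sequences are R-bounded, the required R-boundedness is free. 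Marcinkiewicz then yields that $(M_{k})$ is an $L^{p}$-multiplier, and Lemma \ref{l122} delivers the desired $(L^{p},H^{3,p})$-multiplier property, producing a unique $z\in H^{3,p}(\mathbb{T},X)$ with $\widehat{z}(k)=\Delta_{k}^{-1}\widehat{f}(k)$ for each $f\in L^{p}(\mathbb{T},X)$.

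Next I would verify that $z\in L^{p}(D(A),X)$ and $D_{A}z=f$. Since $z\in H^{3,p}(\mathbb{T},X)$, pick $v\in L^{p}(\mathbb{T},X)$ with $\widehat{v}(k)=-ik^{3}\widehat{z}(k)$. The identity $\Delta_{k}\widehat{z}(k)=\widehat{f}(k)$ means $\widehat{z}(k)\in D(A)$ and lets me express $A\widehat{z}(k)$ as an explicit combination of $\widehat{f}(k)$ and $\widehat{v}(k)$, so that $(A\widehat{z}(k))_{k}$ is itself the Fourier sequence of an $L^{p}$-function. Lemma \ref{lem2} then places $z(t)$ in $D(A)$ a.e.\ and identifies $Az$ with that $L^{p}$-function, hence $z\in L^{p}(D(A),X)$; reassembling the Fourier coefficients one sees that $D_{A}z=f$ holds a.e.

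Finally, injectivity: if $D_{A}z=0$ for some $z$ in the domain, Fourier transforming yields $\Delta_{k}\widehat{z}(k)=0$ for every $k$, and the hypothesis $\sigma_{\mathbb{Z}}(\Delta)=\emptyset$ forces $\widehat{z}(k)=0$ for all $k\in\mathbb{Z}$, hence $z=0$ by Fejér's theorem (Theorem \ref{t2}). Continuity of $D_{A}^{-1}$ is an automatic consequence of the $L^{p}$-multiplier property just established, giving the isomorphism claim. I expect the only genuine obstacle to be the R-boundedness of the discrete ``derivative'' $(k(M_{k+1}-M_{k}))$, but this has essentially been paid for by the earlier Proposition; the rest of the argument is the standard dictionary between Fourier-side identities and pointwise identities, handled by Lemma \ref{lem2}.
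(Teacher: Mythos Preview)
Your proposal is correct and follows essentially the same route as the paper's proof: Theorem~\ref{t} for $(1)\Rightarrow(2)$; for $(2)\Rightarrow(1)$, pass from the $R$-boundedness of $(-ik^{3}\Delta_{k}^{-1})$ to the $L^{p}$-multiplier property (the paper invokes the earlier Proposition implicitly, while you redo the Marcinkiewicz verification), convert via Lemma~\ref{l122} to an $(L^{p},H^{3,p})$-multiplier, and then use Lemma~\ref{lem2} and the Fourier uniqueness argument to conclude. Your treatment of the membership $z\in L^{p}(D(A),X)$ is in fact slightly more careful than the paper's.
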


\begin{proof}
$1) \Rightarrow2)$ see Theorem {\bf \color{red}{(\ref{t})}}\\
$1) \Leftarrow2)$ Let $f \in L^{p}(\mathbb{T}; X)$ . Define $\Delta_{k} = (-ik^{3}I +  A)$,\\
By Lemma {\bf \color{red}{\ref{l1}}},  the family $\left\{-ik^{3} \Delta^{-1}_{k}\right\}_{k \in \mathbb{Z}}$ is an $L^{p}$-multiplier it is equivalent to\\
the family $\left\{ \Delta^{-1}_{k}\right\}_{k \in \mathbb{Z}}$ is an $L^{p}$-multiplier that maps $L^{p}(\mathbb{T}; X)$ into $H^{3,p}(\mathbb{T}; X)$ (Lemma \ref{l122}),\\
namely there exists $z \in H^{3,p}(\mathbb{T}, X)$ such that
\begin{equation}\label{e4}
\hat{z}(k)=\Delta^{-1}_{k} \hat{f}(k)= (-ik^{3}I +  A )^{-1} \hat{f}(k)
\end{equation}
In particular, $z \in L^{p}(\mathbb{T}; X)$ and there exists $v \in L^{p}(\mathbb{T}; X)$ such that $\hat{v}(k) = -ik^{3} \hat{z}(k)$
By Theorem {\bf \color{red}{\ref{t2}}}, we have
$$z(t) = \lim_{n\rightarrow +\infty} \frac{1}{n+1} \sum^{n}_{m=0} \sum^{m}_{k=-m} e^{ikt}\hat{z}(k)$$
Using now {\bf \color{red}{\eqref{e4}}} we have: 
  $$(-ik^{3}I - A)\hat{z}(k) =  \hat{f}(k)\;\; \mbox{ for all} \;\;k \in \mathbb{Z}.$$
i.e
$$\widehat{(D_{A}z)}(k) = \hat{f}(k)\;\; \mbox{ for all} \;\;k \in \mathbb{Z}.$$
Since $A$ is  closed, then  $z(t) \in D(A)$ and $D_{A}z(t) = f(t)$ [Lemma {\bf \color{red}{\ref{lem2}}}].\\
Uniqueness, suppose that $\exists z_{1}, z_{2} : D_{A}z_{1}(t) = f(t) \ \text{and} \ D_{A}z_{2}(t) = f(t)$. \\
Then \\
$$z = z_{1} - z_{2} \in ker(D_{A}) \  \text{i.e} \ \frac{d^{3}}{dt^{3}}x(t) = Ax(t).$$
Taking Fourier transform, we deduce that:\\
$\Delta_{k}\hat{z}(k)=0 \Rightarrow \hat{z}(k)=0 \ \forall k \in \mathbb{Z} \Rightarrow z = 0$. i.e $z_{1} = z_{2}$. Or $D_{A}$ is linear operator then $D_{A}$ is isomorphism.
\end{proof}

\begin{cor}\label{co}
Let $X$ be an UMD space and  $A :D(A) \subset X \rightarrow X$ be an closed linear operator. Then the following assertions are equivalent for $1 <  p < \infty$.
\begin{description}
\item[(1)] for every $f \in L^{p}(\mathbb{T}; X)$  there exists a unique $2\pi$-periodic strong $L^{p}$-solution of Eq. {\bf \color{red}{\eqref{e2}}}.
\item[(2)]  $\sigma_{\mathbb{Z}}(\Delta)= \phi$   and      $\left\{ -ik^{3}\Delta^{-1}_{k}    \right\}_{k \in \mathbb{Z}}$ is $R$-bounded.
\end{description}
\end{cor}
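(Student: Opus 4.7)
The plan is to reduce the corollary directly to Theorem \ref{um} by showing that statement (1) of the corollary is equivalent to statement (1) of Theorem \ref{um}. Once this equivalence is established, the corollary follows immediately from Theorem \ref{um}, since statement (2) of Theorem \ref{um} is literally statement (2) of the corollary. In other words, I want to identify the class of $2\pi$-periodic strong $L^p$-solutions of Eq.~\eqref{e2} with the domain $H^{3,p}(\mathbb{T},X)\cap L^p(D(A),X)$ on which $D_A$ was declared an isomorphism.

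First, I would handle the direction from the isomorphism property to the unique strong solution. Given $f\in L^p(\mathbb{T},X)$, set $z:=D_A^{-1}f$. Since $z\in L^p(D(A),X)$ we have $z(t)\in D(A)$ for a.e. $t$, and since $D_A z=f$ in $L^p(\mathbb{T},X)$, the identity $z'''(t)=Az(t)+f(t)$ holds almost everywhere; thus $z$ is a $2\pi$-periodic strong $L^p$-solution, and uniqueness comes from the injectivity of $D_A$.

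Conversely, assume that for every $f$ there exists a unique strong $L^p$-solution $z$. By definition of a strong solution, $z\in H^{3,p}(\mathbb{T},X)$ with $z(t)\in D(A)$ a.e., and $Az=z'''-f\in L^p(\mathbb{T},X)$. Using the closedness of $A$ (so that the graph norm makes $D(A)$ a Banach space), one concludes $z\in L^p(D(A),X)$. The assignment $f\mapsto z$ is therefore a bijection $L^p(\mathbb{T},X)\to H^{3,p}(\mathbb{T},X)\cap L^p(D(A),X)$ which inverts $D_A$ pointwise. Boundedness of this inverse follows from the closed graph theorem, since $D_A$ is closed as the sum of the closed differentiation operator on $H^{3,p}$ and the closed operator $-A$ on $L^p(D(A),X)$. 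Applying Theorem \ref{um} then yields the equivalence with (2).

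The only delicate point, and the main obstacle if any, is confirming that the functional setting for strong solutions matches the domain $H^{3,p}(\mathbb{T},X)\cap L^p(D(A),X)$ used in Theorem \ref{um}. This is precisely where the closedness of $A$ is invoked to pass from pointwise membership $z(t)\in D(A)$ a.e. together with $z\in L^p$ and $Az\in L^p$ to membership in $L^p(D(A),X)$; everything else is a bookkeeping application of Theorem \ref{um}.
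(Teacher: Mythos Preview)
Your proposal is correct and follows essentially the same approach as the paper: both reduce the corollary to Theorem~\ref{um} by identifying statement~(1) of the corollary with statement~(1) of Theorem~\ref{um}, i.e., by equating the existence of a unique strong $L^p$-solution for every $f$ with $D_A$ being an isomorphism on $H^{3,p}(\mathbb{T},X)\cap L^p(D(A),X)$. The paper's proof is a terse chain of equivalences that simply asserts the final identification, whereas you spell out more carefully (using closedness of $A$ and the closed graph theorem) why the class of strong solutions coincides with this domain; this extra care is justified but does not constitute a different route.
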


\begin{proof}
By theorem \ref{um}, we have
\begin{align*}
(2) & \Leftrightarrow D_{A}: H^{3,p}(\mathbb{T}, X) \cap L^{p}(D(A), X) \rightarrow L^{p}(\mathbb{T}, X) \ \text{is an isomorphism}.\\
&\Leftrightarrow \forall f \in L^{p}(\mathbb{T}, X)\ \text{there exits a unique}\ z \in H^{3,p}(\mathbb{T}, X) \cap L^{p}(D(A), X): D_{A}z = f\\
&\Leftrightarrow \forall f \in L^{p}(\mathbb{T}, X)\ \text{there exits a unique}\ z \in H^{3,p}(\mathbb{T}, X) \cap L^{p}(D(A), X): z=D^{-1}_{A}f\\
&\Leftrightarrow \forall f \in L^{p}(\mathbb{T}, X) \text{there exists a unique $2\pi$-periodic strong $L^{p}$-solution of \ } Eq. {\bf \eqref{e2}}.
\end{align*}
\end{proof}

\section{Application}
To apply the provious result, we propose the following partial functional differential equation 
\begin{eqnarray}\label{ef}
\left\{
\begin{array}{ccccc}
\frac{\partial^{3}}{\partial t^{3}}w(t,x)  = \frac{\partial^{2}}{\partial t^{2}}w(t,x) + g(t,x), x \in [0,\pi], t \geq 0,\\\\
w(0,t) = w(\pi,t)=0, t \geq 0.\\\\
w(x,t)=w_{0}(x,t), x \in [0,\pi], -r \leq t \leq 0.
\end{array}
\right.
\end{eqnarray}
Let $X=C_{0}[0,\pi]=\{u \in C([0,\pi],\mathbb{R}) : u(0) = u(\pi) = 0\}$. 
we define the linear operator $A : D(A) \subset X  \rightarrow X$ by 
\begin{center}
$\left\{
\begin{array}{ccccc}
Ay = y''\\\\
D(A) = \{y \in C^{2}([0,\pi],\mathbb{R}): y(0)=y(\pi)=0\}.
\end{array}
\right.$
\end{center}
Put \\
$$y(t)(x) = w(t,x) \ \text{and} \  f(t)(x) = g(t,x)$$
Thus, Eq. (\ref{ef}) takes the following abstract form
\begin{equation}
\frac{d^{3}}{dt^{3}}y(t) = Ay(t)+ f(t)
\end{equation}
It is well known that A is linear operator. Then by [\cite{9}, Section 3.7] (see also references therein), there exists a constant $c > 0$ such that
\begin{center}
$||(- ik^{3} - A)|| \leq \frac{c}{1 + |k^{3}| }$
\end{center}
Then 
$$\sup_{k \in \mathbb{Z}}||k^{3}(- ik^{3} - A)||  < + \infty$$
We deduce from Corollary (\ref{co})  that the above periodic problem has $L^{p}$-strong solution.

\section{Conclusion}
we are obtained necessary and sufficient conditions to guarantee existence and uniqueness of periodic solutions to the equation $z'''(t) = Az(t) +  f(t)$ in terms of either the R-boundedness of the modified resolvent operator determined by the equation. Our results are obtained in the UMD spaces.

\scriptsize\----------------------------------------------------------------------------------------------------------------------------------------\\\copyright
{\it{Copyright International Knowledge Press. All rights reserved.}}

\end{document}